\documentclass{article}
\usepackage{fullpage}
\usepackage{amssymb}
\usepackage{amsmath}
\usepackage{amsthm}
\usepackage{xcolor}
\usepackage{xargs}
\usepackage{epigraph}
\usepackage{tikz}
\usetikzlibrary{calc}
%
%
%
\theoremstyle{definition}

\newtheorem{conjecture}{Conjecture}
\newtheorem{construction}{Construction}

\theoremstyle{plain}

\newtheorem{theorem}[construction]{Theorem}
\newtheorem{corollary}[construction]{Corollary}
\newtheorem{lemma}[construction]{Lemma}

\theoremstyle{remark}

\newcommand{\Deg}[2]%
{%
  \ensuremath{\textsc{Deg}_{#1}{\left(#2\right)}}%
}
\newcommand{\dist}[2]%
{%
  \ensuremath{\textsc{Dist}_{#1}{\left(#2\right)}}%
}

\definecolor{MTUGOLD}{RGB}{255,207,0}
\definecolor{UMDMAROON}{RGB}{109,0,19}

\title{A note on nearly platonic graphs}

\author{Dalibor Froncek, William Keith, Donald L. Kreher}

\begin{document}

\maketitle
\begin{abstract}
A nearly platonic graph is a $k$-regular simple planar graph in which all but a small number of the faces have the same degree.  We show that it is impossible for a finite graph to have exactly one disparate face, and offer some conjectures, including the conjecture that graphs with two disparate faces come in a small set of families.
\end{abstract}

\section{Introduction}

Several authors (\cite{Crowe}, \cite{HorJuc}, \cite{Jen}, \cite{Malkevitch}) have been interested in planar embeddings of graphs in which almost all faces are of one type, with one or two exceptions. For the most part, these papers deal with \emph{nearly regular} planar graphs: those in which most faces and vertices are of degrees that are a \emph{multiple} of some $m$, and a small number of other faces have degrees that are not a multiple of $m$. The proof techniques involve transformations which may change the number of edges of one or more faces, preserving divisibility of their degrees by $m$.  A typical theorem in the area is Lemma 2.2 of \cite{Malkevitch}, which states that no 3-regular planar graph exists in which all but one face has degree a multiple of three.

These theorems thus leave open the full question with which this article is interested: is it possible to produce a vertex-regular planar graph in which almost all faces have one degree and a small number of faces have a different degree, regardless of whether the disparate face degrees are multiples of some $m$ -- e.g., can a 3-regular graph be drawn in which all faces are triangles except for a single 9-gon -- and if so, what restrictions exist on the construction?\footnote{For the interested professor, the question arose in the context of teaching an introductory combinatorics course, in an attempt to construct a graph with exceptional outer face in anticipation of student error.}

For a single exceptional face, the answer is in the negative: it is not possible to find a finite, planar, regular graph that has all but one face of one degree and a single face of a different degree.  For two exceptional faces, all of our constructions at present are simple variants of the Platonic graphs; we conjecture that these are the only possibilities.  For three exceptional faces, constructions become abundant.

For a question so easily stated, one suspects that the result is already folklore, perhaps demanding greater than usual diligence in checking the literature.  However, a search through the standard graph theory textbooks yields no relevant theorem, a query on MathOverflow (\cite{MOquestion}) attracted no firm answer, citations of \cite{Crowe}--\cite{Malkevitch} remain interested in nearly regular graphs, and plausible variations of the name ``nearly regular'' garnered no relevant papers.  Although the only theorems we require are basic theorems of graph theory, the case by case vertex-counting required is sufficiently delicate that we now have some confidence the theorem has not been previously published.

In the next subsection we recall the relevant theorems of graph theory and construct the basic properties we will make use of in the sequel.  In Section 2 we establish the negative answer for the case with a single exceptional face; in the final section we discuss the cases of two and three exceptional faces, and offer some open questions.

\subsection{Basic theorems}

A $(v,e,f)$-graph will denote a graph with that has $v$ vertices and $e$ edges that has a planar embedding with $f$ faces. Consider such a graph in which the degree of each vertex is $k$, there are $f_1$  faces of degree  $d_1$, and the remaining $f_2=f-f_1$ have degree $d_2$.  Every edge has two ends and abuts two faces, so twice the number of edges must equal both the sum of the degrees of all the vertices, and the sum of the degrees of the faces:

\begin{align*}
2e&=kv\\
2e&=f_1d_1+f_2d_2.
\end{align*}

\noindent An important theorem in graph theory is Euler's formula, which holds that for all planar graphs,
\begin{align*}
v-e+f = 2.
\end{align*}

\noindent Putting these pieces together and solving for various values we obtain:
\begin{align}
f&=\frac{kv-f_1(d_1-d_2)}{d_2}\label{f}\\
v(2d_2-k d_2+2k)&= 2f_1 d_1 + (4-2f_1) d_2 \label{v}\\
\frac{e}{kd_2}\biggl(4-(k-2)(d_2-2)\biggr)&=
\Phi(f_1,d_1,d_2),\label{Phi}
\end{align}
where
\[
\Phi(f_1,d_1,d_2)=2+\frac{f_1(d_1-d_2)}{d_2}
= 2+f_1\left(\frac{d_1}{d_2}-1\right).
\]

If $k=2$, our graph is just a polygon, which has two faces of equal degree (the inner and the outer).  Ignoring those, we have $3 \leq k \leq 5$, and $d_i \geq 3$, because faces must be at least triangles.

Now we can show that, regardless of $k$,

\begin{lemma} If $f_1 \leq 3$, then $\Phi(f_1,d_1,d_2) >0$.
\begin{proof} If $d_1 \geq d_2$, then obviously $\Phi(f_1,d_1,d_2)>0$, so we assume $d_1<d_2$. Then
\[
-1 < \frac{d_1-d_2}{d_2} <0
\]
and so
\[
2-f_1 < \Phi(f_1,d_1,d_2)<2.
\]
Hence if $f_1 \leq 2$, then $\Phi(f_1,d_1,d_2) > 0$. We now consider $f_1=3$. If $d_2 \geq 6$, then
\[
2e = d_2(f-3)+3d_1 \geq 6(f-3)+9 = 6f-9= 2(3f-6)+3\geq 2e+3
\]
a contradiction. Hence $d_2 \leq 5$. Then
\[
\Phi(3,d_1,d_2) \geq 2+3\left( \frac{3}{5}-1 \right)=
\frac{4}{5} >0.
\]
\end{proof}
\end{lemma}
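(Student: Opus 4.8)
The plan is to split on the sign of $d_1 - d_2$ and then isolate the single troublesome subcase $f_1 = 3$. First, if $d_1 \geq d_2$, then $d_1/d_2 - 1 \geq 0$ and so $\Phi = 2 + f_1(d_1/d_2 - 1) \geq 2 > 0$ with no further work; I would dispose of this case in one line. For the rest I would assume $d_1 < d_2$. Since every face is at least a triangle we have $d_1 \geq 3 > 0$, hence $0 < d_1/d_2 < 1$, so $d_1/d_2 - 1$ lies strictly between $-1$ and $0$. Multiplying by $f_1$ and adding $2$ gives the two-sided estimate $2 - f_1 < \Phi < 2$.

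This estimate already settles $f_1 \leq 2$: there $\Phi > 2 - f_1 \geq 0$. So the entire difficulty is concentrated in $f_1 = 3$, where the crude lower bound $2 - f_1 = -1$ is useless and a genuinely arithmetic argument is needed.

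For $f_1 = 3$ my plan is to first pin $d_2$ into the finite range $\{3,4,5\}$ using planarity, then evaluate $\Phi$ directly. To bound $d_2$ I would argue by contradiction: assume $d_2 \geq 6$. The face-degree count $2e = 3 d_1 + (f-3) d_2$, combined with $d_1 \geq 3$ and $d_2 \geq 6$, yields $2e \geq 6(f-3) + 9 = 6f - 9$. On the other hand, since $k \geq 3$ every vertex has degree at least $3$, so $2e = kv \geq 3v$ gives $v \leq 2e/3$; substituting into Euler's formula $v - e + f = 2$ produces the dual edge bound $e \leq 3f - 6$, that is, $2e \leq 6f - 12$. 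The two inequalities $6f - 9 \leq 2e \leq 6f - 12$ are contradictory, so $d_2 \leq 5$. With $d_2 \leq 5$ and $d_1 \geq 3$ we have $d_1/d_2 \geq 3/5$, whence $\Phi(3,d_1,d_2) \geq 2 + 3(3/5 - 1) = 4/5 > 0$, completing the argument.

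I expect the only real obstacle to be the case $f_1 = 3$, and within it the recognition that it is the planarity of the embedding -- not any property of $\Phi$ itself -- that forces $d_2$ to be small. The slightly non-obvious move is to invoke the ``face-side'' Euler bound $e \leq 3f - 6$, which follows from the minimum vertex degree $k \geq 3$ rather than the more familiar $e \leq 3v - 6$ coming from the minimum face degree; once $d_2$ is trapped in a finite set the remaining evaluation of $\Phi$ is a one-line computation.
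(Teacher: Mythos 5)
Your proof is correct and follows essentially the same route as the paper's: the same case split on $d_1 \geq d_2$ versus $d_1 < d_2$, the same bound $2 - f_1 < \Phi < 2$ handling $f_1 \leq 2$, and the same contradiction for $f_1 = 3$, $d_2 \geq 6$ via the dual Euler bound $e \leq 3f - 6$, followed by the identical evaluation $\Phi(3,d_1,d_2) \geq 4/5$. The only difference is presentational: you spell out the derivation of $e \leq 3f-6$ from $k \geq 3$ and Euler's formula, which the paper uses implicitly in the step $2(3f-6)+3 \geq 2e+3$.
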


\begin{corollary}\label{five}
If $f_1 \leq 3$, then
\begin{equation}
(k,d_2)= (3,3), (3,4), (3,5), (4,3),\text{ or } (5,3).
\end{equation}
\begin{proof}
The lemma shows that $\Phi(f_1,d_1,d_2)$ is positive, when $f_1\leq 3$.  Then Equation~\ref{Phi} forces
\[
(k-2)(d_2-2) < 4.
\]
There are only five integral solutions to this inequality when $k,d_2 \geq 3$. They are the solutions listed.
\end{proof}
\end{corollary}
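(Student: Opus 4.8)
The plan is to read the sign structure directly off Equation~\ref{Phi} and then run a short finite enumeration. The lemma just proved guarantees that the right-hand side $\Phi(f_1,d_1,d_2)$ is strictly positive whenever $f_1 \leq 3$. On the left-hand side the prefactor $e/(kd_2)$ is strictly positive, since the graph has at least one edge and $k, d_2 \geq 3$; hence the sign of the whole left-hand side is controlled entirely by the bracketed factor $4-(k-2)(d_2-2)$. Matching signs across the identity therefore forces this factor to be positive, i.e. $(k-2)(d_2-2) < 4$.

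With that inequality in hand, I would invoke the standing constraints $k \in \{3,4,5\}$ and $d_2 \geq 3$ recorded just before the lemma, and enumerate. For $k=3$ the inequality becomes $d_2 - 2 < 4$, giving $d_2 \in \{3,4,5\}$ and hence the pairs $(3,3),(3,4),(3,5)$. For $k=4$ it becomes $2(d_2-2) < 4$, forcing $d_2 = 3$ and the pair $(4,3)$. For $k=5$ it becomes $3(d_2-2) < 4$, again forcing $d_2 = 3$ and the pair $(5,3)$. These five pairs exhaust the solutions. One could even drop the assumption $k \leq 5$ here, since $(k-2)(d_2-2)<4$ together with $d_2 \geq 3$ already excludes $k \geq 6$ on its own; but it is cleaner simply to cite the earlier bound.

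The single place demanding any care --- indeed the only step where an error could plausibly creep in --- is the sign bookkeeping: one must confirm that the coefficient $e/(kd_2)$ is genuinely positive, so that the positivity of $\Phi$ transfers to the bracketed factor without reversing the inequality. Once that is settled, the argument reduces to the mechanical case check above, and I anticipate no substantive obstacle.
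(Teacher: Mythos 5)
Your proposal is correct and follows exactly the paper's argument: positivity of $\Phi$ (from the lemma) transfers through Equation~\ref{Phi} to force $(k-2)(d_2-2)<4$, after which the five pairs are found by finite enumeration with $k,d_2\geq 3$. Your explicit check that the prefactor $e/(kd_2)$ is positive, and your observation that the inequality alone rules out $k\geq 6$, merely make explicit what the paper leaves implicit.
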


If $f_1=0$ the five Platonic solids are obtained.  One corresponds to each of the 5 possibilities enumerated in Corollary~\ref{five}, and a little more work (see any relevant graph theory textbook, for instance \cite{GraverWatkins}) shows that these are the only possible such graphs.

\section{$f_1=1$: An Impossible Mistake}
If $f_1=1$, then a single face has degree different from all the others. Inspired by the adage of J\'{a}ra Cimrman
\begin{quotation}
Platonick\'{a} l\'{a}ska nem\r{u}\v{z}e b\'{y}t \v{c}\'{a}ste\v{c}n\'{a}, mus\'{\i} b\'{y}t \'{u}pln\'{a}
\end{quotation}
we show that such a graph cannot exist.

We study the five possibilities for $(k,d_2)$ above in turn.  In each case, we will calculate the allowable number of vertices as a function of $d_1$: substitute $k$, $d_2$ and $f_1$ into Equation \ref{v} and solve through for $v$.
\begin{equation}\label{v when f_1=1}
v=\frac{2(d_1+d_2)}{4-(k-2)(d_2-2)}
\end{equation}
Then we will consider how they might be adjacent to each other, eventually deriving a contradiction.  In each case, what we essentially show, reformulated, is that the face regularity requirement has to be weakened further to gain any new graphs: the class of possible graphs for a given $(k,d_2)$ with $f_1 \leq 1$ is still populated only by the Platonic graphs, $f_1 = 0$.

Without loss of generality we may assume that the graph has been drawn in the plane so that $F$, the unique face of degree $d_1$, is the outer face. Let $x_0 x_{1} \ldots x_{d_1-1} x_0$ be $\partial F$, the cycle bounding $F$, and denote by $\dist{F}{x_i,x_j}$ the length of the shortest $x_i$ to $x_j$ path on $\partial F$. All remaining vertices and edges are interior to $\partial F$. An edge that is not part of $F$'s bounding cycle, but joins two vertices of the cycle, is called a chord.

\begin{lemma}\label{nochord}
For $f_1 = 1$, $(k,d_2)\in\{(3,3),(3,4),(3,5),(4,3), (5,3)\}$, the outer face has no chords.
\end{lemma}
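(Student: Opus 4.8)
The plan is to argue by contradiction. Suppose the outer face $F$ does have a chord, and among all chords choose one, $c=x_ix_j$, whose endpoints are as close as possible along $\partial F$, i.e. with $\dist{F}{x_i,x_j}$ minimal. Simplicity rules out $\dist{F}{x_i,x_j}=1$ (that would be an edge parallel to a boundary edge), so the minimal arc has length $m\ge 2$. The chord cuts the closed disk bounded by $\partial F$ into two sub-disks, $R_1$ (the minimal-arc side) and $R_2$, each bounded by a cycle consisting of $c$ together with one arc of $\partial F$. Since $F$ is the \emph{unique} face whose degree is not $d_2$, every face of $G$ interior to $\partial F$ has degree $d_2$, so all internal faces of both $R_1$ and $R_2$ are $d_2$-gons.

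First I would record the degree bookkeeping a chord forces. Because the drawing is planar, no edge may cross $c$; hence every vertex lying strictly inside an arc keeps all $k$ of its edges inside the corresponding region, and so retains full degree $k$ there, as does every strictly interior vertex. Only the two chord endpoints lose edges to the other side. Splitting the $k$ edges at $x_i$ into the chord (shared), its two incident boundary edges (one per region), and the remaining $k-3$ interior edges (each on one side), one finds that the degrees of $x_i$ in $R_1$ and in $R_2$ sum to $k+1$, and likewise for $x_j$; since each such degree is at least $2$ it is at most $k-1$. Minimality of $c$ is used once more: no two vertices on the minimal arc, and no arc vertex and endpoint, can be joined by an edge interior to $R_1$ without producing a shorter chord or a parallel edge, so every arc-internal vertex of $R_1$ sends all of its $k-2$ inward edges to vertices strictly interior to $R_1$.

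Next I would run Euler's formula together with the two handshake identities on each region separately, exactly as in the derivation of Equations~\ref{f}--\ref{Phi} but now with a boundary cycle of length $m_i+1$ and with the two corner vertices accounted for. This expresses the number of internal $d_2$-gons, and then the sum $s_i$ of the two corner degrees in region $i$, as explicit functions of $m_i$ and of the number $t_i$ of interior vertices of $R_i$. Corollary~\ref{five} limits $(k,d_2)$ to the five pairs, so the problem becomes a finite check: in each case the constraints $2\le s_i\le 2(k-1)$, together with $t_i\ge 0$ and $m_i\ge 2$, pin $t_i$ (and, when $(k-2)(d_2-2)=1$, also $m_i$) to a very short list of values. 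I would then finish by drawing the forced local picture and exhibiting a concrete impossibility. For example, when $(k,d_2)=(3,3)$ the count forces $t_1=0$ and the boundary of $R_1$ to be a $4$-cycle; triangulating it without interior vertices leaves one arc-internal vertex stuck at degree $2<3$, while inserting both diagonals forces a crossing.

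The main obstacle lies in the pairs $(3,5)$ and $(5,3)$. In the other three cases the count keeps $t_i$ bounded, so the bounded pool of inward edge-slots at those interior vertices caps the arc length $m$ and leaves only finitely many patches to inspect. But for $(3,5)$ and $(5,3)$ the count forces $t_i$ to grow linearly with $m_i$, so neither Euler's formula nor a crude edge-slot count bounds the arc: numerically the interior is always large enough to host the arc-internal vertices. Here I expect to need a genuinely structural argument, uniform in $m$, showing that such a disk --- boundary cycle of length $m+1$, two corners of reduced degree, every other vertex of degree $k$, every internal face a $d_2$-gon --- cannot be realized no matter how large $m$ is. Producing that $m$-independent impossibility, rather than checking ever-larger patches one at a time, is the delicate step, and is where I would concentrate the effort.
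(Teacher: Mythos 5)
There is a genuine gap here, and it is one you have flagged yourself: the proposal never proves the cases $(k,d_2)=(3,5)$ and $(5,3)$, and these are not fringe cases but the heart of the lemma. Your counting correctly shows why the plan stalls there: running Euler's formula and the two handshake identities on the minimal-chord region with boundary cycle of length $m+1$, corner-degree sum $s$, and $t$ interior vertices gives, for $(3,3)$, $s=m+3t+1$ with $s=4$ forced, hence $(m,t)=(3,0)$; for $(3,4)$, $s=t+2$, hence $t=2$; and for $(4,3)$, $s=2t+2$, hence $t\in\{1,2\}$ --- in each case a finite, checkable list. But for $(3,5)$ the same computation forces $s=4$ and $t=m+5$, and for $(5,3)$ it gives $s=t-m+3$ with $4\le s\le 8$, i.e.\ $m+1\le t\le m+5$: the interior vertex count grows with $m$, the numerics exclude nothing, and no finite patch inspection can close the argument. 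At that point you say you would ``concentrate the effort'' on an $m$-independent structural impossibility, but no such argument, nor any idea for one, appears in the proposal. So what is offered is a proof of (at most) three of the five cases plus an accurate diagnosis of the difficulty of the remaining two.

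For comparison, the paper does not use regional counting at all; in all five cases it uses local face-forcing. For $(3,3)$, $(3,4)$, $(4,3)$ this is a short ``zipper'' of forced chords, and your Euler-plus-handshake treatment of those three cases is a legitimate, arguably cleaner alternative. But for $(3,5)$ and $(5,3)$ the paper's substitute for your missing step is a long forced-construction argument growing inward from the chord: for $(3,5)$ it shows that each successive interior vertex $y_1,y_2,\ldots$ is forced, using the observation that any pendant-like configuration (an \emph{inflorescence}) inflates some face's degree beyond $5$, until the configuration cannot be completed; for $(5,3)$, minimality of the chord and triangularity of the faces force neighbor after neighbor until the icosahedron is assembled and the final forced edge contradicts the minimal choice of chord. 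That forcing analysis is precisely the content your proposal lacks; to finish along your lines you would have to reproduce it (or something equivalent), since your counting framework can tell you how many interior vertices exist but not that they cannot be wired together.
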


\begin{proof}
We proceed by contradiction.  Assume there exists such a graph with outer face $F$ and a chord.

Suppose without loss of generality that $x_0x_j$ is the chord.  If $k=3$, then obviously $3\leq j\leq d_{1}-3$, otherwise $x_1$ or $x_{d_1-1}$ is of degree 2. Let $y_i$ be the vertices within the region $R_1$ bounded by the cycle $x_0x_1 \dots x_j x_0$ and $z_i$ the vertices within the region $R_2$ bounded by $x_j x_{j+1} \dots x_0 x_j$.

\item[\boldmath $k=3, d_2=3$.]
Because $x_0$ is already of degree 3, the path $x_1,x_0,x_j$ must be on the boundary of a triangular face, forcing edge $x_1x_j$, which implies $\deg{x_j}\geq 4$, a contradiction.

\item[\boldmath $k=3, d_2=4$.]
We observe that because both $x_0$ and $x_j$ are already of degree 3, the path $x_1 x_0 x_j x_{j-1}$ must be on the boundary of a rectangular face, which forces edge $x_1x_{j-1}$. Then because both $x_1$ and $x_{j-1}$ are of degree 3, the path $x_2 x_1 x_{j-1} x_{j-2}$ must be on the boundary of a rectangular face, which forces edge $x_2x_{j-2}$. Continuing this way, we either form a rectangular face $x_{t-1} x_{t} x_{t+1} x_{t+2}$ when $j=2t+1$ -- but $x_{t}$ and $x_{t+1}$ are still just of degree 2, a contradiction -- or we form a triangular face $x_{t-1} x_{t} x_{t+1}$ when $j=2t$, a contradiction as well.  Any vertices internal to the final face cannot be adjacent to $x_{t-1}$ and $x_{t+2}$ or $x_{t+1}$ respectively, meaning the final face bounded by the last chord would be of degree greater than 4.

\item[\boldmath $k=3,d_2=5$.]

Without loss of generality, assume $x_0$ is adjacent to $x_j$ and $j$ is minimal in that no edge $x_\ell x_k$ exists with $0 \leq \ell, k < j$.  We produce the contradiction illustrated in Figure \ref{nochordk3d25}.

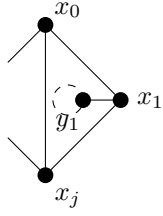
\begin{figure}\label{inflor}
\begin{center}\begin{tikzpicture}
\draw (-0.5,0.5) -- (0,0) -- (0,2) -- (1,1) -- (0,0);
\draw (-0.5, 1.5) -- (0,2);
\draw (1,1) -- (0.5,1);
\draw[dashed] (0.3,1) circle (0.2);
\fill (0,0) circle (3pt);
\fill (0,2) circle (3pt);
\fill (1,1) circle (3pt);
\fill (0.5,1) circle (3pt);
\draw (0.3,-0.3) node {$x_j$};
\draw (1.4, 1) node {$x_1$};
\draw (0.3, 2.2) node {$x_0$};
\draw (0.3,0.7) node {$y_1$};
\end{tikzpicture}
\end{center}
\caption{A basic inflorescence.}
\end{figure}

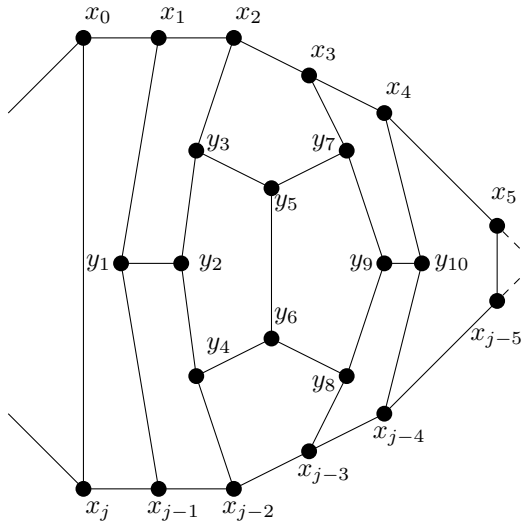
\begin{figure}\label{nochordk3d25}
\begin{center}
\begin{tikzpicture}
\draw (-1,1) -- (0,0) -- (1,0) -- (2,0) -- (3,0.5) -- (4,1)  -- (5.5,2.5);
\draw[dashed] (5.8,2.8) -- (5.5, 2.5);
\draw (0,0 -- (0,6);
\draw (-1,5) -- (0,6) -- (1,6) -- (2,6) -- (3,5.5) -- (4,5) -- (5.5,3.5);
\draw [dashed] (5.8,3.2) -- (5.5, 3.5);
\draw (0,0) -- (0,6);
\draw (0.2,-0.3) node {$x_j$};
\fill (0,0) circle (3pt);
\draw (0.2,6.3) node {$x_0$};
\fill (0,6) circle (3pt);
\draw (1.2,6.3) node {$x_1$};
\fill (1,6) circle (3pt);
\draw (2.2,6.3) node {$x_2$};
\fill (2,6) circle (3pt);
\draw (1.2,-0.3) node {$x_{j-1}$};
\fill (1,0) circle (3pt);
\draw (2.2,-0.3) node {$x_{j-2}$};
\fill (2,0) circle (3pt);
\draw (3.2,5.8) node {$x_3$};
\fill (3,5.5) circle (3pt);
\draw (3.2,0.2) node {$x_{j-3}$};
\fill (3,0.5) circle (3pt);
\draw (4.2,5.3) node {$x_4$};
\fill (4,5) circle (3pt);
\draw (4.2,0.7) node {$x_{j-4}$};
\fill (4,1) circle (3pt);
\draw (5.5,2) node {$x_{j-5}$};
\fill (5.5,2.5) circle (3pt);
\draw (5.6,3.9) node {$x_5$};
\fill (5.5,3.5) circle (3pt);
\draw (1,0) -- (0.5,3) -- (1,6);
\draw (0.2,3) node {$y_1$};
\fill (0.5,3) circle (3pt);
\draw (0.5,3) -- (1.3,3);
\draw (1.7,3) node {$y_2$};
\fill (1.3,3) circle (3pt);
\draw (2,6) -- (1.5,4.5) -- (1.3,3) -- (1.5,1.5) -- (2,0);
\draw (1.8,4.6) node {$y_3$};
\fill (1.5,4.5) circle (3pt);
\draw (1.8,1.9) node {$y_4$};
\fill (1.5,1.5) circle (3pt);
\draw (1.5,4.5) -- (2.5,4);
\draw (2.7,3.8) node {$y_5$};
\fill (2.5,4) circle (3pt);
\draw (1.5,1.5) -- (2.5,2);
\draw (2.7,2.3) node {$y_6$};
\fill (2.5,2) circle (3pt);
\draw (3,5.5) -- (3.5,4.5) -- (2.5,4) -- (2.5,2) -- (3.5,1.5) -- (3,0.5);
\draw (3.2,4.6) node {$y_7$};
\fill (3.5,4.5) circle (3pt);
\draw (3.2,1.4) node {$y_8$};
\fill (3.5,1.5) circle (3pt);
\draw (3.5,4.5) -- (4,3) -- (3.5,1.5);
\draw (3.7,3) node {$y_9$};
\fill (4,3) circle (3pt);
\draw (4,3) -- (4.5,3);
\draw (4.9,3) node {$y_{10}$};
\fill (4.5,3) circle (3pt);
\draw (4,5) -- (4.5,3) -- (4,1);
\draw (5.5,2.5) -- (5.5,3.5);
\end{tikzpicture}
\end{center}
\caption{Contradiction for $k=3$, $d_2 = 5$ boundary self-adjacency.}
\end{figure}

We have $x_1 \neq x_j$, $x_{j-1} \neq x_0$ to avoid a multigraph.  Likewise $1 \neq j-1$ else $x_1$ must be adjacent to $y_1$ not on the boundary of $F$, and $y_1$ must in turn be adjacent to some other vertices within this face, since the boundary vertices are all of degree 3 already.  But this makes $y_1 x_1$ a bridge, and the face within which it lies is of degree strictly greater than 5.  This is illustrated in figure \ref{inflor}.  Call such an instance an \emph{inflorescence} for the remainder of this argument.

So there are at least two distinct vertices $x_1$ and $x_{j-1}$.  Now $x_1$ must connect to some $y_1$ and $x_{j-1}$ to some $y_{j-1}$.  But then $y_1 = y_{j-1}$ to make the pentagon $y_1 x_1 x_0 x_j x_{j-1}$.  To give $y_1$ degree 3, it must be adjacent to some $y_2$; if it were adjacent to $x_2$ it would create a triangle, and to $x_3$ or higher a face of degree greater than 5, as $x_2$ would require an inflorescence.

Now $x_1 x_{j-1}$ is not an edge, else $y_1 y_2$ is an inflorescence causing a face of degree above 5, nor is $x_2 = x_{j-2}$, else either $x_2 y_2$ is an edge, creating at least one face of degree 4, or it is not an edge, in which case $x_2$ is adjacent to some $y_3$, which must be adjacent to $y_2$ to close two pentagonal faces, yet neither $y_2$ nor $y_3$ yet has degree 3, so inflorescences would increase the degree of one or both of the internal faces with $x_2$ on the boundary.

Now $y_2$ is not adjacent to $x_2$ or $x_{j-2}$ (square, or greater with inflorescence), so it must be adjacent to two $y_i$, say $y_3$ and $y_4$.  These must be adjacent to $x_2$ and $x_{j-2}$ to close the faces.  We have $x_2$ not adjacent to $x_{j-2}$, else $y_3$ is connected by a path of length 2 to $y_4$, say via $y_5$, forming a face of degree 4 or, with an inflorescence from $y_5$, degree 6 or more.

Neither $y_3$ nor $y_4$ can be adjacent to each other (a triangle is formed, or a face of degree greater than 5 with an inflorescence), nor by a path via a $y_5$ of length 2 (a square is formed, or a face of degree 6 or more); thus $y_3$ is adjacent to $y_4$ by a path of length 3, say via $y_5$ and $y_6$.  We cannot now have $x_3 = x_{j-3}$, since in such a case if $x_3$ is adjacent to $y_5$ or $y_6$, a face of degree 4 (or 6 or more) is formed, while if not adjacent to either, it must be adjacent to some $y_7$ which in turn is adjacent to both $y_5$ and $y_6$, forming a triangle.  So $x_3$ and $x_{j-3}$ exist and are distinct.

Now $x_3$ is not adjacent to $y_5$ (square), $y_6$ ($y_5$ would root an infloresence into a pentagon), so it is adjacent to some $y_7$, and $y_7$ must be adjacent to $y_5$ to close a face.  Likewise $x_{j-3}$ is adjacent to some $y_8$ in turn adjacent to $y_6$, with $y_7 \neq y_8$, else $y_6$ is on the boundary of a face of degree 6 or more.

We have $y_7$ not adjacent to $y_8$ (square), and so must be adjacent via a path of length 2; the intermediate vertex cannot be an $x_i$ since this would increase the degree of the vertex to 4 or more, so say the intermediate vertex is $y_9$.  We now have $x_3$ not adjacent to $x_{j-3}$, else $y_9$ would root an inflorescence, nor is $x_4 = x_{j-4}$, since if $y_9$ is adjacent to $x_4$ squares are created, and if not, the path from $y_9$ to $x_4$ would have at most one intermediate vertex which would root an inflorescence.

Now $y_9$ is not adjacent to $x_4$ or $x_{j-4}$ (square), so it must be adjacent to a $y_{10}$, which to close faces must in turn be adjacent to $x_4 $ and $x_{j-4}$.  But now to make a face of degree 5, both $x_5$ and $x_{j-5}$ must exist, but must be adjacent; but then any other path from $x_5$ to $x_{j-5}$, which must not include $x_4$, $x_{j-4}$ or $y_{10}$, will be part of the boundary of a face of degree greater than 5, a contradiction.

The required vertices are illustrated in Figure~\ref{nochordk3d25}.

\item[\boldmath $k=4, d_2=3$.]
Suppose that the fourth neighbor of $x_0$ is in $R_2$, that is, it is either $z_i$ or $x_i$ for $j+1\leq d_1-2$. Then since $x_0$ is already of degree 4, the path $x_1x_0x_j$ must be on the boundary of a triangular face, forcing edge $x_1x_j$. Now since $x_j$ is already of degree 4, the path $x_1,x_j,x_{j-1}$ must be on the boundary of a triangular face, forcing edge $x_1x_{j-1}$. Once more, $x_1$ is now of degree 4, so the path $x_2,x_1,x_{j-1}$ must be on the boundary of a triangular face, forcing edge $x_2x_{j-1}$. We continue until the forced edge reaches $x_{\lfloor{\frac{j}{2}}\rfloor}$ when $j$ is odd or $x_{\frac{j}{2}+1}$ when $j$ is even. Then there is only one vertex of degree 2 left on the boundary of $R_1$, namely $x_{\lceil{\frac{j}{2}}\rceil}$ when $j$ is odd or $x_{\frac{j}{2}}$ when $j$ is even, and the next forced edge would be a multiple edge, a contradiction.  The argument works in the opposite direction if the fourth neighbor of $x_0$ is in $R_1$.

\item[\boldmath $k=5,d_2=3$.]

Assume that $x_0 x_j$ is minimal in the sense that no chord $x_i x_k$ exists with $0 \leq i < k \leq j$ other than $x_0 x_j$ itself.  Since $k=5$, $x_0$ and $x_j$ both have two other neighbors.  Clearly if both neighbors of $x_0$ (resp. $x_j$) are within $R_2$, then in order to make a triangular face, we must have a chord $x_1 x_j$ (resp. $x_0 x_{j-1}$), a contradiction.  If both neighbors of both $x_0$ and $x_j$ are within $R_1$, then the path $x_{d-1} x_0 x_j x_{j+1}$ must border a face of degree at least 4, also a contradiction.

Thus, either $x_0$ and $x_j$ both have exactly one more neighbor in each of $R_1$ and $R_2$, or $x_0$ has both additional neighbors in $R_1$ and $x_j$ has exactly one additional neighbor in each of $R_1$ and $R_2$, or $x_0$ has one neighbor in each $R_i$ and $x_j$ has both neighbors in $R_1$.  The latter two are the same case after a relabeling, and so we deal with the former.

In both cases, the contradiction results from our conditions forcing the construction of the icosahedron; the minimality-contradicting edge is on the border of its planar embedding.

\phantom{.}

\noindent \textbf{Case 1:} Suppose both vertices have one neighbor in each region.  We produce the contradiction to minimality illustrated in Figure 3.


\begin{figure}\label{k5d23caseone}
\begin{center}
\begin{tikzpicture}
\draw (0,0) -- (0,4) -- (1,2) -- (0,0) -- (2,0) -- (1,2) -- (2,4) -- (3,2) -- (2,0) -- (4,1) -- (3,2) -- (4,3) -- (2,4) -- (5,4) -- (4,3) -- (5,2) -- (4,1) -- (5,0) -- (5,4) -- (6,4) -- (6,0) -- (2,0);
\draw (0,0) -- (-1,2) -- (0,4) -- (2,4);
\draw (1,2) -- (3,2);
\draw (4,1) -- (4,3);
\draw (5,0) -- (6,2) -- (5,4);
\draw (5,2) -- (6,2);
\draw [dashed] (6,0) -- (7,0);
\draw [dashed] (6,4) -- (7,4);
\draw [dashed] (0,0) -- (-1,0);
\draw [dashed] (0,4) -- (-1,4);
\draw (6,0) arc (-45:45:2.828);
\draw (0.2,-0.3) node {$x_j$};
\fill (0,0) circle (3pt);
\draw (2.2,-0.3) node {$x_{j-1}$};
\fill (2,0) circle (3pt);
\draw (5.2,-0.3) node {$x_{j-2}$};
\fill (5,0) circle (3pt);
\draw (6.2,-0.3) node {$x_{j-3}$};
\fill (6,0) circle (3pt);
\draw (0.2,4.3) node {$x_0$};
\fill (0,4) circle (3pt);
\draw (2.2,4.3) node {$x_1$};
\fill (2,4) circle (3pt);
\draw (5.2,4.3) node {$x_2$};
\fill (5,4) circle (3pt);
\draw (6.2,4.3) node {$x_3$};
\fill (6,4) circle (3pt);
\draw (0.7,2) node {$y_1$};
\fill (1,2) circle (3pt);
\draw (3.3,2) node {$y_2$};
\fill (3,2) circle (3pt);
\draw (4.1,0.7) node {$y_4$};
\fill (4,1) circle (3pt);
\draw (4.1,3.3) node {$y_3$};
\fill (4,3) circle (3pt);
\draw (4.7,2) node {$y_5$};
\fill (5,2) circle (3pt);
\draw (6.3,2) node {$y_6$};
\fill (6,2) circle (3pt);
\end{tikzpicture}
\end{center}
\caption{Contradiction for Case 1, $k=5$, $d_2=3$.}
\end{figure}
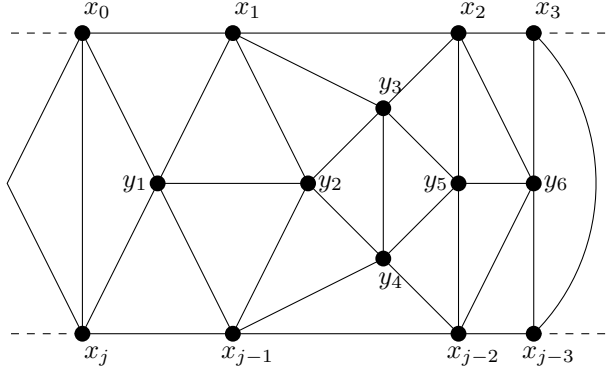

The two neighbors of $x_0$ and $x_j$ in $R_1$ must be the same, to produce a triangle bordering $x_0 x_j$.  Call this neighbor $y_1$.  The paths $y_1 x_0 x_1$ and $y_1 x_j x_{j-1}$ must close to create faces.  We cannot have $x_1 = x_{j-1}$, else $y_1$ would root inflorescences in one or both of these triangles.  Thus, $y_1$ has one additional neighbor, say $y_2$.

Now $x_1$ and $x_{j-1}$ each have three additional neighbors, one of which must be $y_2$ as the faces bordered by their edge with $y_1$ must close.  Now $y_2$ must have two addtional neighbors, one of which must be the neighbor of $x_1$ and the other of $x_{j-1}$ along the edge incident to these vertices which is nearest to $y_2$ and on the other side from $y_1$.  These must be two distinct neighbors, else $y_2$ needs another neighbor (say $z$) inside one or the other of the two resulting faces; the putative $z$ can then have only at most three neighbors on the boundary of the face and requires additional neighbors within the face, which lack sufficient boundary vertices to connect to and thus form boundaries of faces of degree greater than 3.

Let the new neighbors of $y_2$ be $y_3$ and $y_4$.  They must be adjacent.  Since $x_1$ and $x_{j-1}$ need an additional neighbor outside the faces containing their edge with $y_2$, we must have an $x_2$ and $x_{j-2}$ on the outer face; these cannot be equal, for if they were, $x_2$ would need to be adjacent to all four of $x_1$, $x_{j-1}$, $y_3$, and $y_4$, and would need an additional neighbor in one of its bounded faces, say $z$, which could be adjacent to at most three of its bounding neighbors; $z$ would need additional neighbors which would form boundaries of faces of degree greater than 3.

Now $x_2$ must be adjacent to $y_3$ and $x_{j-2}$ to $y_4$.  Further, $y_3$ and $y_4$ require an additional neighbor each, not within any of their so-far closed faces (it would be unable to connect sufficiently).  To form a triangle, it must be the same vertex, say $y_5$.  Now $x_2$ and $x_{j-2}$ must both be adjacent to $y_5$.

One more neighbor of $y_5$ is needed, as usual not in any of its so far closed nearby faces; call it $y_6$.  We will have $x_2$ and $x_{j-2}$ both adjacent to $y_6$, and requiring one more neighbor each, say $x_3$ and $x_{j-3}$, which cannot be the same neighbor: $y_6$ needs two more neighbors, and the extra neighbor would be create a face of degree too high.

But now $y_6$ already has five edges and thus $x_3$ and $x_{j-3}$ must be adjacent to close the relevant face.  But $x_3$ and $x_{j-3}$ still need two more neighbors each to be of degree 5, which cannot appear within any of the so far completed faces, so this edge cannot be an edge of $F$.  This contradicts our minimal choice of $x_0 x_j$.

\phantom{.}

\noindent \textbf{Case 2:} The logic is extremely similar.  Using $x_0$ as the vertex with its two additional neighbors in $R_1$ and $x_j$ with one additional neighbor in each $R_i$, we illustrate the required vertices and eventual contradiction to minimality in Figure 4.


\begin{figure}\label{k5trianglecase2}
\begin{center}
\begin{tikzpicture}
\draw (0,0) -- (0,3) -- (4,3) -- (4,0) -- (0,0) -- (1,1) -- (1.5,0) -- (3,1) -- (4,0) -- (3,2) -- (3,1) -- (2,2) -- (2,1) -- (1,2) -- (0,3) -- (1,1) -- (2,1);
\draw (1,1) -- (1,2) -- (1.5,3) -- (2,2) -- (3,2) -- (4,3);
\draw (1,2) -- (2,2);
\draw (1.5,3) -- (3,2);
\draw (1.5,0) -- (2,1) -- (3,1);
\draw (0,0) -- (-1,3) -- (0,3);
\draw [dashed] (0,0) -- (-1,0);
\draw [dashed] (-1,3) -- (-2,3);
\draw [dashed] (-1,3) -- (-2,2);
\draw [dashed] (-1,3) -- (-2,1);
\draw [dashed] (4,0) -- (4.5,0);
\draw [dashed] (4,3) -- (4.5,3);
\draw [dashed] (4,3) -- (4.5,2);
\draw (0.2,-0.3) node {$x_j$};
\fill (0,0) circle (3pt);
\draw (1.7,-0.3) node {$x_{j-1}$};
\fill (1.5,0) circle (3pt);
\draw (4.2,-0.3) node {$x_{j-2}$};
\fill (4,0) circle (3pt);
\draw (-0.8,3.3) node {$x_{d-1}$};
\fill (-1,3) circle (3pt);
\draw (0.2,3.3) node {$x_0$};
\fill (0,3) circle (3pt);
\draw (1.7,3.3) node {$x_1$};
\fill (1.5,3) circle (3pt);
\draw (4.2,3.3) node {$x_2$};
\fill (4,3) circle (3pt);
\draw (1,0.6) node {$y_2$};
\fill (1,1) circle (3pt);
\draw (0.95,2.4) node {$y_1$};
\fill (1,2) circle (3pt);
\draw (2.2,2.3) node {$y_3$};
\fill (2,2) circle (3pt);
\draw (2.2,0.7) node {$y_4$};
\fill (2,1) circle (3pt);
\draw (3,2.3) node {$y_5$};
\fill (3,2) circle (3pt);
\draw (3,0.7) node {$y_6$};
\fill (3,1) circle (3pt);
\end{tikzpicture}
\end{center}
\caption{Contradiction for Case 2, $k=5$, $d_2=3$.}
\end{figure}
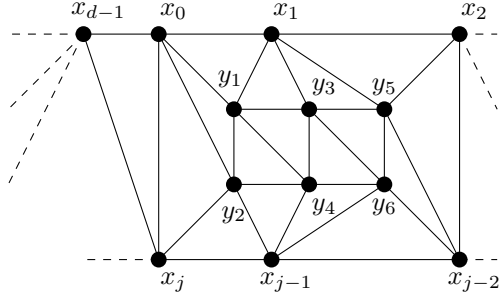

This completes the proof of the lemma.
\end{proof}

We now prove our main theorem.

\phantom{.}

\noindent \textbf{Proof of Theorem 4}

\phantom{.}

\noindent {\boldmath{$k=3,d_2=3$}} \quad\\

In this situation after substitution in Equation~\ref{v when f_1=1} we obtain
\begin{equation}\label{case 1 v}
v=2\left(\frac{d_1+3}{3}\right).
\end{equation}

Either the graph has vertices other than those that form the boundary of $F$ the exceptional face, or it does not.  If it does not, then $v = d_1 = 6$ and $e=3v/2=9$. Hence there is a chord to $F$ contrary to Lemma~\ref{nochord}.

Thus the graph must have a vertex interior to $F$. Then
\[
v=2\left(\frac{d_1+3}{3}\right) \geq d_1 +1.
\]
But then $d_1 \leq 3$, a contradiction, because $d_1 \neq d_2$.

\phantom{.}

\noindent {\boldmath{$k=3,d_2=4$}} \quad\\

In this situation after substitution in Equation~\ref{v when f_1=1} we obtain
\begin{equation}\label{case 2 v}
v=d_1+4
\end{equation}
Hence there is a set $Y$ of exactly $4$ vertices interior to the face $F$. Also $d_1\geq 6$, because $d_1 \neq d_2$ and $v$ is even, because $k$ is odd.  Because there are no chords to $F$ (Lemma~\ref{nochord}) it follows that each $x_i$ on the boundary of $F$ is adjacent to some some vertex in $Y$.

Consider an edge $xx'$ incident to $F$. Let $y$, $y'$ be the vertices adjacent to $x$ and $x'$ respectively. Because $yxx'y'$ is a path of length 3, it follows that $yy'$ is an edge. Hence every edge $x_ix_{i+1}$ incident to $F$ has a mate $y_iy_{i+1}$ on $Y$. Thus because $d_1 \geq 6$, $d_2=4$ and $|Y|=4$ at least two edges on $Y$ are mated twice to edges on the boundary of $F$. Then because $k=3$, there can be no edge with ends in $Y$ incident to a doubly mated edge, contrary to the requirement that there be at least 6 mated edges.

\begin{quotation}
\hfill ``Polygamie je zavr\v{z}en\'{\i}hodn\'{a}, pokud to nen\'{i} se mnou.''\break
\smallskip
\hfill---J\'{a}ra Cimrman
\end{quotation}

\noindent {\boldmath{$k=3,d_2=5$}} \quad\\

This part is longer, so we itemize briefly the statements we will prove:

\begin{itemize}
\item The $y_i$ to which the $x_i$ are adjacent are distinct.
\item The $y_i$ are also adjacent to a set \{$z_i$\}, none of which are $y_i$ or $x_i$ and all of which are distinct.
\item The $z_i$ are not adjacent to each other, and must be adjacent to $w_i$, which are not $x_i$, $y_i$ or $z_i$.
\item There must be exactly five $w_i$ which form a face boundary, giving a contradiction.
\end{itemize}

Because $k=3$, each vertex $x_i$ is adjacent to exactly one $y_i$.  These are distinct, due to the following cases.  If $x_i$ and $x_{i+1}$ are both adjacent to $y_i$, a triangle is formed (or a face of degree greater than 5 with inflorescence from $y_i$).  If $x_i$ and $x_{i+2}$ are adjacent to $y_i$ then $x_{i+1}$ either roots an inflorescence causing a face of degree at least 6, or if connected to $y_i$ by a path not containing an edge of $\partial F$, bounds a face of degree at least 6 on one side of that path.  Finally, if $x_i$ and $x_{i+j}$ with $j \geq 3$ ($j$ minimal among such cases) are both adjacent to $y_i$, a face of degree greater than 5 is formed if $y_i$ does not connect strictly within $\partial F$ to the path within $\partial F$ connecting $x_{i+1}$ and $x_{i+j-1}$ (which must be of length at least 2 since there are no chords), while if it does, the face of degree at least 6 occurs on the opposite side, bounded in part by $x_{i-1} x_i y_i x_{i+j} x_{i+j+1}$ and the path of length at least 2 connecting $x_{i-1}$ and $x_{i+j+1}$.

Each path $y_i x_i x_{i+1} y_{i+1}$ must be part of the boundary of a face of degree 5 with a fifth vertex $z_i$.  The $z_i$ cannot be any $y_s$: first, if $z_i = y_{i+2}$ or $y_{i-1}$ a square is formed.  Suppose instead that $z_i = y_{i+j}$, with $j$ minimal in absolute value and either $j \geq 3$ or $j \leq -2$.  The arguments are the same up to sign and a shift by 1, so suppose $j \geq 3$.  Then the path $y_i z_i x_{i+j} x_{i+j-1} y_{i+j-1}$ must bound a pentagon with fifth vertex $y_{i+1}$.  But this contradicts the minimality of $j$, for we now have a $j$ one less is absolute value (which may be the previous case).

The $z_i$ must be distinct.  If $z_i = z_{i+1}$, then $y_{i+1}$ either roots an inflorescence or the vertex other than $z_i$ and $x_{i+1}$ to which $y_{i+1}$ is connected does so, while if $z_i = z_{i+j}$ with $j$ minimal and at least 2, the vertex $z_i$ is of degree at least 4.

None of the $z_i$ are adjacent to each other: if $z_i$ is adjacent to $z_{i+1}$, a triangle is formed; if to $z_{i+2}$, then $z_{i+1}$ roots an inflorescence; if to $z_{i+j}$ with $j$ minimal, $j \geq 3$, a face of degree greater than 5 is formed.

So $z_0$ is adjacent to $w_0$, $z_1$ is adjacent to $w_1 \neq w_0$ (square), and $w_1$ is adjacent to $w_0$ to close a face.  Next $z_2$ is adjacent to $w_2$, which is not $w_1$ (square) or $w_0$ ($w_1$ would root an inflorescence), and hence $w_2$ is adjacent to $w_1$.  Next $z_3$ is adjacent to $w_3$, which is not $w_2$ (square), $w_1$ (already degree 3), or $w_0$ (hexagon or greater), and so $w_3$ is adjacent to $w_2$.  Likewise $z_4$ must exist (with only three or four $x_i$, the $w_i$ would all be adjacent and no inflorescence would be possible to increase the degree of the resulting triangle or square) and be adjacent to $w_4$, which is not $w_3$ (square), $w_2$ or $w_1$ (already degree 3), or $w_0$ ($w_3$ would root an inflorescence).  Then $w_4$ is adjacent to $w_3$, and the cycle must close to form a face of degree 5 bounded by the $w_i$.  But additional $z_i$ would make a face abutting the edge $w_4 w_0$ of too large a degree. Hence $d_1 = 5$, a contradiction.

\phantom{.}

\noindent {\boldmath{$k=4,d_2=3$}} \quad\\

In this case $v=d_1+3$ and $e=2v=2d_1+6$. Hence there is a set $Y=\{y_1,y_2,y_3\}$ of exactly $3$ vertices not incident to $F$. Each vertex $x_i$ on $F$ is adjacent to two vertices in $Y$, because $k=4$ and $F$ has no chords. This accounts for $3d_1$ edges. Thus $d_1 \leq 6$. Because $d_1 \neq d_2$, we have $4\leq d_1 \leq 6$. Furthermore there are thus $e-3d_1= 6-d_1$ edges on $Y$. But  if $y_i,y_j$ are incident to $x_h$, then $y_ix_hy_j$ is a path of length 3. Hence, because $d_2=3$, it follows that $y_iy_j$ is an edge.

Suppose that $x_1$ is adjacent to $y_1$ and $y_2$.  Then $x_2$ is also adjacent to, say, $y_2$.  It must also be adjacent to another $y_i$.  If $x_2$ is also adjacent to $y_1$, then $y_3$ is either within the regions bounded by the edges on $x_1$, $x_2$, $y_1$ and $y_2$, or not.  If it is, then it may not be adjacent to $x_1$ or $x_2$, which are already of degree 4, and it is isolated from any other $x_i$, and hence has too few possible neighbors.  If $y_3$ is external to this subgraph, then either $y_1$ or $y_2$ is internal to the cycle formed by the other three and is isolated from any possible fourth neighbors.  Thus $x_2$ is adjacent to $y_2$ and thus also $y_3$, and hence $y_2 y_3$ and then further $y_3 y_1$ are edges.  Now since $x_0 y_1$ is an edge, the triangularity of faces requires that $x_0 y_3$ be an edge, and now all $y_i$ have four neighbors and no other external vertices are possible, i.e. we have constructed the octahedron.

\phantom{.}

\noindent {\boldmath{$k=5,d_2=3$}} \quad\\

The leftmost non-boundary edge of $x_i$ and the rightmost nonboundary edge of $x_{i+1}$ must meet at vertex $y_i$ to form a triangular face.  We have the following: $y_i$ cannot be any $x_j$ since the bounding face has no chords; $y_i \neq y_{i+1}$ since $x_i$ cannot be twice adjacent to the same $y_i$.  Finally we have that $y_i \neq y_{i+j}$ for $j > 1, j \neq d$, for suppose $j$ is a minimal contradiction to this claim.  Then $x_i$ and $x_{i+1}$ both have neighbors along edges intermediate between those connecting them to $y_i$ and, respectively, $y_{i-1}$ and $y_{i+1}$; call these temporarily $z_i$ and $z_{i+1}$.  Now $y_i$ must be adjacent to these $z_i$ in order to close the triangular faces partially bounded by $y_i x_i z_i$ and $y_i x_{i+1} z_{i+1}$, since $x_i$ and $x_{i+1}$ are already of degree 5. But $y_i$ is additionally a neighbor of $x_i$, $x_{i+1}$, and $x_{i+j}$ and $x_{i+j+1}$, which requires too many edges.  (The $z_k$ cannot be any $x_\ell$ since this would be a chord, and the $x_\ell$ listed are distinct since $j>1$.)  Hence all $y_i$ are distinct; $\partial F$ consists of the base edges of a series of $d$ triangles joined at their base vertices and otherwise distinct.

Each boundary vertex $x_i$ has an additional neighbor which by definition is adjacent by an edge lying between $y_i$ and $y_{i-1}$.  Let such a vertex adjacent to $x_i$ be called $z_i$.  We again claim that all $z_i$ are distinct and not equal to $y_j$ or $x_j$ for any $j$.  The $x_j$ clause is clear since this would be a chord of the boundary.

First, by definition, $z_i$ cannot be $y_{i}$ or $y_{i-1}$, as it is a separate neighbor of $x_i$.  Now suppose that $z_i$ is $y_{i+j}$, $j > 1$ and minimal among all such $j$, including with signs reversed and distances taken modulo $d$.  In that case to close the triangular faces abutted by $z_i x_i y_{i-1}$ and $z_i x_i y_{i}$ we would require edges $z_i y_{i-1}$ and $z_i y_{i}$ respectively.  This makes $y_{i+j}$ of degree 5. Now since $z_{i+j}$ is not $y_{i+j}$, to close the triangular face abutted by $y_{i} z_i x_{i+j}$ we would require $z_{i+j} = y_{i}$, contradicting the minimality of $j$ once signs are reversed.  Hence no $z_i$ can be any $y_k$.

Next, in order to close triangular faces, each $z_i$ must be adjacent to $y_i$, to close the face partially bounded by $z_i x_i y_i$, and $y_{i-1}$, to close the face partially bounded by $z_i x_i y_{i-1}$.  If $z_i = z_{i+1}$, then $y_{i}$ possesses two more neighbors, the edges for which will increase the degree of one of the faces that $y_i$ abuts beyond 3.  If $z_i = z_{i+j}$, $j > 1$, then $z_i$ would have to be of degree at least 6 since $z_i$ is adjacent to $y_{i-1}$ and $y_i$, unless $y_{i+j} = y_{i-1}$, in which case we reverse the direction of labeling and argue as before for $j=1$.  Thus, all $z_i$ are distinct and not equal to $x_k$ or $y_k$ for any $k$.

Each $y_i$ requires another neighbor outside of the triangular faces it abuts so far; call these $w_i$.  Since $y_i$ is now of degree 5, each $w_i$ is necessarily adjacent to $z_{i-1}$ and $z_i$ to close these faces, making the $z_i$ of degree 5.  But then the faces $w_i z_{i+1} w_i$ must close cyclically, and the resulting face must be triangular.  Thus in the same manner as previous arguments we are led to the contradiction that $d_1 = d_2$, i.e. we have constructed the icosahedron.

By elimination of all cases, we have concluded the theorem:

\begin{theorem}
There are no nearly platonic graphs with one disparate face.
\end{theorem}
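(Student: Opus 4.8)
The plan is to dispose of the five admissible pairs $(k,d_2)$ one at a time. By Corollary~\ref{five}, since $f_1 = 1 \le 3$, the only possibilities are $(3,3)$, $(3,4)$, $(3,5)$, $(4,3)$, and $(5,3)$; and by Lemma~\ref{nochord}, in each of these the outer face $F$ carries no chord. The engine driving every case is the vertex count, Equation~\ref{v when f_1=1}, which pins $v$ as an explicit linear function of $d_1$ once $k$ and $d_2$ are fixed. The strategy is then to compare this count against the number of vertices forced onto $\partial F$ and into its interior, and to use the absence of chords to propagate adjacencies inward in concentric rings until the structure is compelled to close up into a Platonic graph — at which point $d_1 = d_2$, contradicting the standing hypothesis that $F$ is disparate.

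First I would handle the two ``thin'' cases $(3,3)$ and $(4,3)$, where Equation~\ref{v when f_1=1} leaves very few interior vertices. For $(3,3)$ one gets $v = \tfrac{2}{3}(d_1+3)$; either there is no interior vertex, forcing $v = d_1 = 6$ and hence, by the edge count, a chord that contradicts Lemma~\ref{nochord}, or there is at least one interior vertex, forcing $d_1 \le 3 = d_2$, also a contradiction. For $(4,3)$ one gets $v = d_1 + 3$, i.e.\ exactly three interior vertices, each boundary vertex adjacent to two of them; a short count on these three vertices and the induced triangulation forces the octahedron, again yielding $d_1 = d_2$.

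Next come the ``mating'' case $(3,4)$ and the two long concentric cases $(3,5)$, $(5,3)$. For $(3,4)$, Equation~\ref{v when f_1=1} gives exactly four interior vertices, and the requirement that interior faces be quadrilaterals forces each boundary edge $x_i x_{i+1}$ to have a parallel mate $y_i y_{i+1}$ among the interior vertices; with $d_1 \ge 6$ boundary edges but only four interior vertices, the mates must overlap in a way incompatible with $3$-regularity. For $(3,5)$ and $(5,3)$ the interior is large ($v = 2(d_1+d_2)$), so the argument must build the graph in successive rings $\{x_i\}, \{y_i\}, \{z_i\}, \ldots$, showing at each ring that the new vertices are distinct, are not disguised chords of $\partial F$, and are joined only in the patterns allowed by the fixed face degree; the ring count must terminate by closing into a single final face, forcing $d_1 = d_2$ (the dodecahedron and the icosahedron, respectively).

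The main obstacle I anticipate is precisely these two long cases. The difficulty is not any single inequality but the bookkeeping: at every ring one must rule out the collapses $y_i = y_{i+j}$, the premature adjacencies $z_i \sim z_{i+j}$, and the degenerate \emph{inflorescence} configurations in which a not-yet-saturated interior vertex is forced to sprout a pendant subtree that inflates an adjacent face beyond degree $d_2$. Each such local configuration has to be excluded by a degree or face-degree count, and the exclusions must be genuinely exhaustive — treating $j = 1$, $j = 2$, and $j \ge 3$ separately, and with both signs of the shift — so that the only surviving possibility is the clean concentric closure. Making this case analysis complete rather than merely plausible is the delicate part, and is where the bulk of the work will lie.
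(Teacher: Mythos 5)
Your proposal follows essentially the same route as the paper's own proof: the same reduction to the five pairs $(k,d_2)$ via Corollary~\ref{five}, the same use of Lemma~\ref{nochord} and Equation~\ref{v when f_1=1}, and case-by-case arguments that match the paper's (the interior-vertex dichotomy for $(3,3)$, the mated-edge count for $(3,4)$, the three-interior-vertex octahedron forcing for $(4,3)$, and concentric-ring closure into the dodecahedron and icosahedron for $(3,5)$ and $(5,3)$). The only difference is one of completeness, not of method: the exhaustive inflorescence/collapse bookkeeping you defer in the two long cases is precisely what the paper carries out in detail, and it confirms your outline works.
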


\section{$f_1=$ 2 or 3}
We will  say that a $k$-regular simple plane graph is a $(k;d_1^{n_1} d_2^{n_2} \cdots d_t^{n_t})$-graph if it has $n_i$ faces of degree $d_i$, $i=1,2,\ldots,t$, where $f=n_1+n_2+\cdots+n_t$.

We have found fifteen families of graphs of type $(k; d_1^2 d_2^{n_2})$; interestingly, other than the cycle all seem to be related to platonic solids.  The families are indexed by the equivalent possible pairs of distinct faces of platonic solids: the general idea is that one uses those faces as the two disparate faces, and repeats a fundamental unit around a long cycle.  Prisms and antiprisms are common examples based on the cube and octahedron respectively.  (Of course, the fundamental unit may be only a fraction of the related Platonic graph.)

The cycle is trivially the $(2;n^2 d_2^0)$ graph.

The tetrahedron has only one equivalent pair of faces, since any two faces share an edge.  Cutting this edge and repeating the resulting graph results in a ``thin cycle'' which is not the skeleton of a polyhedron, because it is not connected; however, it is a $(3;(3d)^2 3^{2d})$-graph.  Its fundamental unit is

\begin{center}
\begin{tikzpicture}
\draw (0,0) -- (0.5,0.5) -- (1,0) -- (0.5,-0.5) -- (0,0);
\draw (0.5,-0.5) -- (0.5,0.5);
\draw (0,0) -- (-0.5,0);
\draw [dashed] (1,0) -- (1.5,0);
\end{tikzpicture} 
\end{center}

There are two families related to the cube.  The prisms are $(3;d^2 4^d)$-graphs isomorphic to  $C_d \square P_2$. They exist for all $d\geq 3$; the $d=4$ case is the cube.  These are polyhedral.

The other family related to the cube is the related thin cycle, with fundamental unit shown below.

\begin{center}
\begin{tikzpicture}
\draw (0,0) -- (-0.5,0);
\draw (0,0) -- (0.5,0.5) -- (2,0.5) -- (2.5,0) -- (2,-0.5) -- (0.5,-0.5) -- (0,0);
\draw (0.5,-0.5) -- (1,0) -- (0.5,0.5);
\draw (1,0) -- (1.5,0);
\draw (2,-0.5) -- (1.5,0) -- (2,0.5);
\draw [dashed] (2.5,0) -- (3,0);
\end{tikzpicture}
\end{center}

There are three families related to the octahedron.  The antiprisms are $(4;d^2 3^{2d})$-graphs. They exist for all $d\geq 3$; the $d=3$ case is the octahedron.  They arise from choosing two opposite faces.

The thin cycle has fundamental unit shown below, related to the choice of two faces that share an edge.

\begin{center}
\begin{tikzpicture}
\draw (0,0) -- (1.5,0) -- (0.75,0.5) -- (0.5,0) -- (0.75,-0.5) -- (1,0) -- (0.75,0.5) -- (0,0) -- (0.75,-0.5) -- (1.5,0);
\draw [dashed] (0,0) -- (-0.5,0);
\draw [dashed] (1.5,0) -- (2,0);
\end{tikzpicture}
\end{center}

One may also choose two faces in the octahedron that share only one vertex, yielding an even less polyhedral $(4;(3d)^2 3^{6d})$-graph, since the two disparate faces share multiple isolated vertices.

\begin{center}
\begin{tikzpicture}
\draw (0,0) -- (1,1) -- (1,-1) -- (0,0);
\draw (1,1) -- (2,0) -- (1,-1) -- (3,-1) -- (2,0) -- (3,1) -- (1,1);
\draw (3,-1) -- (3,1);
\draw [dashed] (-0.5,0.5) -- (0,0) -- (-0.5,-0.5);
\draw (3,1) -- (4, 0) -- (3,-1);
\draw [dashed] (4.5,0.5) -- (4,0) -- (4.5,-0.5);
\end{tikzpicture}
\end{center}

There are three families related to the dodecahedron.  One is prism-like, consisting of the skeleton of a truncated trapezohedron, formed by choosing two opposite faces in the dodecahedron.  These are $(3;d^2 5^{2d})$-graphs. They exist for all $d\geq 3$; the $d=5$ case is the dodecahedron.

\begin{center}
\begin{tikzpicture}
\draw [dashed] (0,0) -- (-0.5,0);
\draw (0,-1) -- (3.5,-1);
\draw (0,1) -- (3.5,1);
\draw (0.5,-1) -- (0.5,-0.25) -- (1,0.25) -- (1.5,-0.25) -- (2,0.25) -- (2.5,-0.25) -- (3,0.25) -- (3,1);
\draw (1,1) -- (1,0.25);
\draw (1.5,-1) -- (1.5,-0.25);
\draw (2,1) -- (2,0.25);
\draw (2.5,-1) -- (2.5,-0.25);
\draw [dashed] (4,0) -- (4.5,0);
\end{tikzpicture}
\end{center}

The thin cycle formed from the dodecahedron by choosing two adjacent faces has the following fundamental unit.

\begin{center}
\begin{tikzpicture}
\draw (-1,0) -- (0,0) -- (0.5,1) -- (3,1.5) -- (5.5,1) -- (6,0) -- (5.5,-1) -- (3,-1.5) -- (0.5,-1) -- (0,0);
\draw [dashed] (6,0) -- (6.5,0);
\draw (0.5,1) -- (1,0.5) -- (1,-0.5) -- (0.5,-1);
\draw (1,0.5) -- (2.5,0.5) -- (2.75,0) -- (2.5,-0.5) -- (1,-0.5);
\draw (2.75,0) -- (3.25,0);
\draw (2.5,0.5) -- (3,1) -- (3,1.5);
\draw (2.5,-0.5) -- (3,-1) -- (3,-1.5);
\draw (3,-1) -- (3.5,-0.5) -- (3.25,0) -- (3.5,0.5) -- (3,1);
\draw (3.5,-0.5) -- (5,-0.5) -- (5,0.5) -- (3.5,0.5);
\draw (5,-0.5) -- (5.5,-1);
\draw (5,0.5) -- (5.5,1);
\end{tikzpicture}
\end{center}

And a ``thick cycle'' formed from the dodecahedron by choosing a face and a face neither adjacent nor opposite has the following fundamental unit.

\begin{center}
\begin{tikzpicture}
\draw (0,0) -- (0,2) -- (5,2) -- (5,0) -- (0,0);
\draw (0.5,0) -- (1,1) -- (0.5,2);
\draw (1.5,0) -- (2,0.5) -- (1.5,1) -- (2,1.5) -- (1.5,2);
\draw (2.5,0.5) -- (2.5,1.5);
\draw (3.5,0) -- (3,0.5) -- (3.5,1) -- (3,1.5) -- (3.5,2);
\draw (4.5,0) -- (4,1) -- (4.5,2);
\draw (1,1) -- (1.5,1);
\draw (2,0.5) -- (3,0.5);
\draw (2,1.5) -- (3,1.5);
\draw (3.5,1) -- (4,1);
\draw [dashed] (-0.5,0) -- (0,0);
\draw [dashed] (-0.5,2) -- (0,2);
\draw [dashed] (5,0) -- (5.5,0);
\draw [dashed] (5,2) -- (5.5,2);
\end{tikzpicture}
\end{center}

Finally, there are five families related to the icosahedron.

By choosing two faces sharing a side, we obtain the following fundamental unit for the related thin cycle:

\begin{center}
\begin{tikzpicture}[scale=0.5]
\draw (-0.5,0) -- (0,0) -- (4,2) -- (8,0) -- (4,-2) -- (0,0) -- (2,0.5) -- (4,2) -- (6,0.5) -- (8,0) -- (6,-0.5) -- (4,-2) -- (2,-0.5) -- (0,0);
\draw (2,-0.5) -- (2,0.5) -- (4,1) -- (6,0.5) -- (6,-0.5) -- (4,-1) -- (2,-0.5) -- (3.5,0) -- (4,1) -- (4.5,0) -- (4,-1) -- (3.5,0) -- (4.5,0) -- (6,-0.5);
\draw (2,0.5) -- (3.5,0);
\draw (4.5,0) -- (6,0.5);
\draw (4,1) -- (4,2);
\draw (4,-1) -- (4,-2);
\draw [dashed] (8,0) -- (8.5,0);
\end{tikzpicture}
\end{center}

By choosing two faces sharing exactly one vertex, we obtain the following fundamental unit.

\begin{center}
\begin{tikzpicture}
\draw (0,0) -- (1,1) -- (2.5,1) -- (2.5,-1) -- (1,-1) -- (0,0) -- (0.5,0) -- (1,1) -- (1,0.5) -- (0.5,0) -- (1,-0.5) -- (1,0.5) -- (1.5,0.5) -- (2.5,1) -- (2,0) -- (1.5,0.5) -- (1.5,0) -- (2,0) -- (1.5,-0.5) -- (1,-1) -- (1,-0.5) -- (1.5,-0.5) -- (1.5,0) -- (1,-0.5);
\draw (0.5,0) -- (1,-1);
\draw (1,0.5) -- (1.5,0);
\draw (1.5,-0.5) -- (2.5,-1);
\draw (2,0) -- (2.5,-1);
\draw (1,1) -- (1.5,0.5);
\draw [dashed] (-0.5,0.5) -- (0,0) -- (-0.5,-0.5);
\draw (2.5,1) -- (3,0) -- (2.5,-1);
\draw [dashed] (3.5,-0.5) -- (3,0) -- (3.5,0.5);
\draw [dashed] (3,0) -- (3.5,0);
\end{tikzpicture}
\end{center}

Choosing one face, and one of the three faces that shares a side with the face opposite the first, gives another thick cycle, yielding a $(5;{(3d)}^2 3^{18d})$-graph:

\begin{center}
\begin{tikzpicture}[scale=0.5]
\draw (0,0) -- (0,4) -- (1,2) -- (0,0) -- (2,0) -- (1,2) -- (2,4) -- (2,2) -- (2,0) -- (3.5,1) -- (2,2) -- (3.5,3) -- (2,4) -- (5,4) -- (3.5,3) -- (5,2) -- (3.5,1) -- (5,0) -- (5,4) -- (7,4) --(7,2) -- (7,0) -- (2,0);
\draw (0,0) -- (0,4) -- (2,4);
\draw (7,0) -- (7,4) -- (6,2) -- (7,0);
\draw (1,2) -- (2,2);
\draw (3.5,1) -- (3.5,3);
\draw (5,0) -- (6,2) -- (5,4);
\draw (5,2) -- (6,2);
\draw [dashed] (7,0) -- (8,0);
\draw [dashed] (7,4) -- (8,4);
\draw [dashed] (0,0) -- (-1,0);
\draw [dashed] (0,4) -- (-1,4);
\draw [dashed] (7,0) -- (8,1);
\draw [dashed] (7,4) -- (8,3);
\draw [dashed] (0,0) -- (-1,1);
\draw [dashed] (0,4) -- (-1,3);
\draw (7,0) -- (7,4);
\end{tikzpicture}
\end{center}

Choosing one face, and one of the six faces on the far side that shares a single vertex with the face opposite the first, yields the following fundamental unit:

\begin{center}
\begin{tikzpicture}[scale=0.5]
\draw (0,0) -- (0,3) -- (8,3) -- (8,0) -- (0,0) -- (2,1) -- (3,0) -- (2,2) -- (2,1) -- (0,2) -- (2,2) -- (3,3) -- (5,0) -- (5,3) -- (6,1) -- (6,2) -- (8,1) -- (6,1) -- (8,0);
\draw (0,1) -- (2,1);
\draw (0,3) -- (2,2);
\draw (3,0) -- (3,3);
\draw (5,0) -- (6,1);
\draw (5,3) -- (6,2) -- (8,2);
\draw (6,2) -- (8,3);
\draw [dashed] (-0.5,0) -- (0,0) -- (-0.5,0.5);
\draw [dashed] (-0.5,1) -- (0,1) -- (-0.5,1.5);
\draw [dashed] (-0.5,2) -- (0,2);
\draw [dashed] (-0.5,2.5) -- (0,3) -- (-0.5,3);
\draw [dashed] (8.5,0) -- (8,0) -- (8.5,0.5);
\draw [dashed] (8.5,1) -- (8,1);
\draw [dashed] (8.5,1.5) -- (8,2) -- (8.5,2);
\draw [dashed] (8.5,2.5) -- (8,3) -- (8.5,3);
\end{tikzpicture}
\end{center}

Finally, choosing two opposite faces yields the following graph, which is the only unit where the two faces are separated by a path of minimum length 2.  Like the previous graphs of this type, it may be divided into a smaller repeatable fraction, in this case one third:

\begin{center}
\begin{tabular}{cc}
\begin{tikzpicture}[scale=0.5]
\draw (0,0) -- (6,0) -- (7,1) -- (7,2) -- (1,2) -- (1,1) -- (7,1);
\draw (0,0) -- (1,1) -- (2,0) -- (3,1) -- (4,0) -- (5,1) -- (6,0);
\draw (7,2) -- (6,1) -- (5,2) -- (4,1) -- (3,2) -- (2,1) -- (1,2);
\draw (2,0) -- (2,1);
\draw (3,2) -- (3,1);
\draw (4,0) -- (4,1);
\draw (5,2) -- (5,1);
\draw (6,0) -- (6,1);
\draw [dashed] (0,1) -- (1,1);
\draw [dashed] (7,1) -- (8,1);
\end{tikzpicture}
&
\begin{tikzpicture}[scale=0.5]
\draw [dashed] (0,1) -- (1,1);
\draw (0,0) -- (2,0) -- (3,1) -- (3,2) -- (1,2) -- (1,1) -- (2,0) -- (2,1) -- (1,1) -- (0,0);
\draw (1,2) -- (2,1) -- (3,2);
\draw (2,1) -- (3,1);
\draw [dashed] (3,1) -- (4,1);
\end{tikzpicture}
\end{tabular}

\end{center}

In all fifteen of these families, one property is constant: both of the disparate faces have the same degree, since we produce the families by repeating a given fundamental unit around a cycle, and the units involved are axially symmetric.  As of this writing, we have been unable to generate a counterexample to the following conjecture:

\begin{conjecture} If a graph is vertex-regular and planar, and all but 2 faces are of one degree, then the remaining two faces must have the same degree as each other.
\end{conjecture}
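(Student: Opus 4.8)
The plan is to follow the same architecture as the $f_1=1$ argument: first cut down to a short list of admissible $(k,d)$ pairs, then exploit the rigidity forced by $k$-regularity together with a single bulk face-degree. Write $d$ for the common face degree and $a,b$ for the degrees of the two exceptional faces; by the impossibility theorem for $f_1=1$ we may assume $a,b\neq d$, and the $k=2$ polygon is excluded as before. Combining $2e=kv$, $2e=(f-2)d+a+b$, and Euler's formula exactly as in the derivation of Equation~\ref{Phi} yields
\[
e\bigl(4-(k-2)(d-2)\bigr)=k(a+b).
\]
Since the right-hand side is positive we again get $(k-2)(d-2)<4$, so $(k,d)$ is one of $(3,3),(3,4),(3,5),(4,3),(5,3)$, precisely the list of Corollary~\ref{five}. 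This relation fixes the \emph{sum} $a+b$ in terms of $v,k,d$ but says nothing about the individual values, and a combinatorial Gauss--Bonnet accounting for the embedding reproduces the same relation and no more. This is exactly why a purely enumerative argument cannot succeed and genuine structural input is required.

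The structural heart of the argument is a propagation/layering analysis for each of the five cases, generalizing Lemma~\ref{nochord} and the case work of Section~2. Place one exceptional face $A$ as the outer face. As in the $f_1=1$ proof, the absence of chords together with the fact that every boundary vertex must complete $d$-gons using exactly $k$ edges forces the cells incident to $\partial A$ into a rigid pattern, producing a next concentric cycle $C_1$; iterating produces nested cycles $C_1,C_2,\dots$. In the $f_1=1$ setting this forced pattern had nowhere to terminate except by closing into a Platonic graph, giving a contradiction; with two exceptional faces the propagation may legitimately terminate at the second exceptional face $B$. The goal is therefore not a contradiction but a conservation law: I would prove a \emph{width-invariance lemma} asserting that consecutive layers have equal length, so that $|\partial A|=|C_1|=\cdots=|\partial B|$ and hence $a=b$.

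The main obstacle is proving this width invariance, that is, ruling out any local ``defect'' cell at which the forced pattern changes the length of a layer while preserving $k$-regularity and the bulk face-degree $d$. Because every bulk vertex carries strictly positive combinatorial curvature in all five cases, the admissible region between the two faces behaves like a rigid, positively curved tube, which is exactly the phenomenon one wants to exploit; turning this heuristic into a proof requires showing, case by case, that the only length-consistent way to extend one layer to the next is the length-preserving one realized in the prism, antiprism, and truncated-trapezohedron families. Two complications must be absorbed into the analysis. First, the exceptional faces need not be vertex-disjoint (the octahedral and icosahedral single-shared-vertex families), so the ``layers'' can pinch and must be tracked as closed walks rather than simple cycles. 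Second, the two faces may be separated by a path of length greater than one (the opposite-faces icosahedral unit is separated by a path of length $2$), so the termination of the propagation is itself a multi-step configuration that must be classified. I expect the bookkeeping to be substantially heavier than in the $f_1=1$ case -- which is precisely why the statement is offered here as a conjecture rather than a theorem -- but I would attack it by the same disciplined enumeration of forced adjacencies, case by case on the five pairs $(k,d)$, with the width-invariance lemma as the single target that, once established, immediately forces $a=b$.
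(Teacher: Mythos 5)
First, the part of your argument that is actually proved is correct and worth noting: combining $2e=kv$, $2e=(f-2)d+a+b$ and Euler's formula does give
\[
e\bigl(4-(k-2)(d-2)\bigr)=k(a+b),
\]
and since $a+b>0$ this forces $(k-2)(d-2)<4$, recovering the five pairs of Corollary~\ref{five} (in fact slightly more cleanly than via Lemma 1, since positivity is immediate here). But be aware that this statement is left as a conjecture in the paper -- the authors explicitly say they have no proof, only an absence of counterexamples among their fifteen families -- so the bar for your proposal is whether it closes the question on its own, and it does not.

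The genuine gap is that everything after the counting step is a plan rather than an argument. Your ``width-invariance lemma'' (consecutive layers between the two disparate faces have equal length, hence $a=b$) is not a reduction of the problem: it is the conjecture itself, rephrased in the language of layers, and you give no mechanism for proving it beyond the curvature heuristic. Moreover, the tools you propose to import from Section 2 do not transfer as stated. Lemma~\ref{nochord} is proved under the hypothesis $f_1=1$, and its case analyses repeatedly use the fact that \emph{every} face inside \emph{both} regions cut off by a chord has degree $d_2$; with a second disparate face sitting inside one of those regions, the forcing chains (e.g.\ the zig-zag of forced edges in the $k=3,d_2=4$ case, or the icosahedron construction in the $k=5,d_2=3$ case) can terminate at that face instead of producing a contradiction, so even ``no chords'' would have to be reproved from scratch for $f_1=2$. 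Finally, the concentric-cycle picture presupposes that the region between the two exceptional faces is an annulus swept out by simple cycles, but the paper's own families (the thin cycles, and the octahedral and icosahedral shared-vertex families) show the ``layers'' can pinch at vertices shared by both disparate faces; you acknowledge this and the length-$2$ separation issue as ``complications to be absorbed,'' but naming the hard cases is not the same as handling them. As it stands, the proposal establishes the reduction to five $(k,d)$ pairs and nothing further.
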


Another observation is that in all these families the longest path between the boundaries of the two disparate faces is at most two edges.  Can the distance be increased indefinitely?  Our suspicion is not.  In fact, both of these claims would be implied by a much stronger conjecture:

\begin{conjecture} The families listed above are the only types of planar graph with exactly two disparate faces.
\end{conjecture}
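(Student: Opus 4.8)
The plan is to promote the layer-peeling technique of Lemma~\ref{nochord} from a single boundary to the annular region trapped between the two disparate faces, and then to match the resulting structure against a clean bijective picture: every graph in the class should be a strip cut from one of the five Platonic solids $P_{k,d_2}$---joining a chosen pair of faces---and repeated $d$ times around a cycle. First I would invoke Corollary~\ref{five} to reduce to the five pairs $(k,d_2)$, and record what the enumeration \emph{ought} to be, namely that the number of families equals the number of orbits of \emph{pairs} of faces of $P_{k,d_2}$ under its symmetry group: one for the tetrahedron (any two faces share an edge), two for the cube (edge-adjacent or opposite), three each for the octahedron and dodecahedron, and five for the icosahedron (since the dodecahedron graph is distance-transitive of diameter five), totalling fourteen, with the degenerate $k=2$ cylinder supplying the fifteenth. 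Exhibiting this bijection is the entire content of the conjecture.

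Draw $F_1$ as the outer face and $F_2$ in the interior. Because every interior vertex has degree exactly $k$ and every face other than $F_1,F_2$ is a $d_2$-gon, the incidence pattern around any interior vertex or interior face is \emph{forced} to coincide with that of $P_{k,d_2}$, and since $(k-2)(d_2-2)<4$ the relevant local tiling is the finite, positively curved one. I would first extend Lemma~\ref{nochord} together with the ``inflorescence'' arguments to show that neither $F_1$ nor $F_2$ has a chord and that the region between $\partial F_1$ and $\partial F_2$ is a genuine annulus tiled by $d_2$-gons---separately treating the degenerate ``thin'' and ``thick cycle'' configurations in which the two disparate faces are forced to meet at isolated shared vertices. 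With chords excluded, I would peel concentric layers inward from $\partial F_1$ exactly as in the five cases of Lemma~\ref{nochord}: each new ring of faces and its interior vertices is uniquely determined by the ring before it, so the annulus is foliated into combinatorially identical bands.

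To upgrade this to a single repeated unit, cut the annulus along a shortest $\partial F_1$-to-$\partial F_2$ path $\gamma$ to obtain a simply connected strip that embeds in $P_{k,d_2}$; traversing the annulus once returns $\gamma$ to itself by a rotation, so the strip is a concatenation of $d$ copies of one fundamental unit whose ``cross section'' is precisely the combinatorial separation type (adjacent, vertex-sharing, opposite, $\dots$) of the two chosen faces in $P_{k,d_2}$. A combinatorial Gauss--Bonnet count---each interior vertex carries positive curvature $1-\tfrac{k}{2}+\tfrac{k}{d_2}>0$, while the boundary vertices of the two large faces carry the compensating negative curvature, the total summing to the sphere's $2$---bounds the radial width of the annulus and hence leaves only finitely many candidate cross sections, which one then identifies with the enumerated face-pair orbits. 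Equation~\ref{v} fixes $d_1$ and $d_2$ as functions of $d$, and because each unit is axially symmetric the two disparate faces receive equal contributions on every pass, so they must share a degree; this would simultaneously settle the equal-degree conjecture stated just above.

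The hard part is the global rigidity underlying the second and third steps. Local forcing only guarantees that each layer continues the Platonic pattern; ruling out that the inward-growing rings pinch off, re-collide, or close into an unexpected second defect before reaching $F_2$ demands the same delicate, figure-driven bookkeeping as Lemma~\ref{nochord}, now conducted with two moving boundaries at once, and it is precisely here that a case is most easily missed. Equally delicate is proving that the list of cross sections is \emph{complete}---that no face-pair type of a Platonic solid produces a strip we have overlooked, and that no candidate strip fails to close consistently after $d$ repetitions. These two gaps, completeness of the unit list and exclusion of exotic closures, are exactly what keep the statement a conjecture rather than a theorem.
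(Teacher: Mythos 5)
This statement is the paper's second conjecture: the authors offer no proof of it at all, only the fifteen constructed families as evidence and the remark that they have been unable to find a counterexample. So there is no argument in the paper to compare yours against; for your submission to be judged correct it would have to be a complete new proof, and it is not one.

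Your proposal is a reasonable research program rather than a proof. The outline is sensible---reduce to the five $(k,d_2)$ pairs via Corollary~\ref{five} (which does still apply when $f_1=2$, even allowing the two disparate faces distinct degrees), exclude chords by extending Lemma~\ref{nochord}, peel layers across the annulus between the two disparate faces, and cut the annulus into repeated fundamental units indexed by orbits of face-pairs of the corresponding Platonic solid; your count of those orbits (one, two, three, three, five, plus the cycle) does match the paper's fifteen families. But the two steps that carry all of the mathematical weight are exactly the ones you defer: (i) global rigidity, i.e.\ that the inward layer-peeling with two moving boundaries cannot pinch off, re-collide, or create an unexpected third defect before reaching $F_2$---and note that local forcing genuinely does not suffice here, since the paper's abundant $f_1=3$ examples are locally indistinguishable from Platonic tilings away from their disparate faces, so any rigidity argument must exploit the two-face hypothesis globally; and (ii) completeness of the list of cross-sections, i.e.\ that no strip type and no exotic closure after $d$ repetitions has been overlooked. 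You yourself describe these as ``exactly what keep the statement a conjecture rather than a theorem,'' which is an accurate self-assessment: what you have written identifies the right structure and the right obstacles, but the obstacles are the theorem, and they remain open both in your proposal and in the paper.
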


When there are 3 or more disparate faces the disparate face degrees may be different.  Indeed, it is possible to produce graphs with all three disparate faces having differing face degrees:

\begin{center}
\begin{tikzpicture}
\draw (-3,0) arc (180:0:3);
\draw (0,3) -- (0,2) -- (-0.5,1.5) -- (0,1) -- (0.5,1.5) -- (0,2);
\draw (-0.5,1.5) -- (0.5,1.5);
\draw (0,0) -- (0,1);
\draw (0,0) -- (0.5,0) -- (0.75,0.25) -- (1,0) -- (0.75,-0.25) -- (0.5,0);
\draw (0.75,-0.25) -- (0.75,0.25);
\draw (1,0) -- (1.5,0) -- (1.75,0.25) -- (2,0) -- (1.75,-0.25) -- (1.5,0);
\draw (1.75,-0.25) -- (1.75,0.25);
\draw (2,0) -- (2.5,0) -- (2.75,0.25) -- (3,0) -- (2.75,-0.25) -- (2.5,0);
\draw (2.75,-0.25) -- (2.75,0.25);
\draw (0,0) -- (-0.5,0) -- (-0.75,0.25) -- (-1,0) -- (-0.75,-0.25) -- (-0.5,0);
\draw (-0.75,-0.25) -- (-0.75,0.25);
\draw (-1,0) -- (-2.5,0) -- (-2.75,0.25) -- (-3,0) -- (-2.75,-0.25) -- (-2.5,0);
\draw (-2.75,-0.25) -- (-2.75,0.25);
\end{tikzpicture}
\end{center}

Of course, there are also 3-disparate graphs which display symmetries:

\begin{center}
\begin{tabular}{c|c}
\begin{tikzpicture}[scale=0.4, line width=1]
\foreach \i in {0,...,5}
{
 \foreach \j in {0,...,5}
 {
  \coordinate (X\i\j) at (\i,\j);
 }
}
\coordinate (X13) at (1,2);
\coordinate (X24) at (3,4);
\coordinate (X31) at (2,1);
\coordinate (X42) at (4,3);
\draw[fill=lightgray](X00)--(X11)--(X14)--(X05)--cycle;
\draw[fill=lightgray](X05)--(X55)--(X44)--(X14)--cycle;
\draw[fill=lightgray](X55)--(X50)--(X41)--(X44)--cycle;
\draw[fill=lightgray](X50)--(X41)--(X11)--(X00)--cycle;

\draw[fill=lightgray](X11)--(X13)--(X23)--(X32)--(X31)--cycle;
\draw[fill=lightgray](X44)--(X24)--(X23)--(X32)--(X42)--cycle;
\draw (X00) circle(0.1); \draw (X05) circle(0.1);
\draw (X11) circle(0.1); \draw (X13) circle(0.1);
\draw (X14) circle(0.1); \draw (X23) circle(0.1);
\draw (X24) circle(0.1); \draw (X31) circle(0.1);
\draw (X32) circle(0.1); \draw (X41) circle(0.1);
\draw (X42) circle(0.1); \draw (X44) circle(0.1);
\draw (X50) circle(0.1); \draw (X55) circle(0.1);

\end{tikzpicture}

& 

\begin{tikzpicture}[scale=0.4, line width=1]
\draw[fill=lightgray]
  (3,0)--(-1,3)--(-1,5)--(3,8)--(7,5)--(7,3)--cycle;
\draw
  (3,1)--(1,2)--(1,3)--(1,4)--(1,5)--(1,6)--(3,7)
   --(5,6)--(5,5)--(5,4)--(5,3)--(5,2)--cycle
  (3,2)--(2,2)--(2,4)--(2,6)--(3,6)--(4,6)--(4,4)
   --(4,2)--cycle;
\draw(1,6)--(2,6) (4,6)--(5,6);
\draw(1,4)--(2,4) (4,4)--(5,4);
\draw(1,2)--(2,2) (4,2)--(5,2);
\draw(3,2)--(3,6);
\draw(-1,5)--(1,5) (5,5)--(7,5);
\draw(-1,3)--(1,3) (5,3)--(7,3);
\draw(3,0)--(3,1);
\draw(3,7)--(3,8);
\draw[fill=white]
  (1,2)--(3,1)--(5,2)--cycle
  (1,6)--(3,7)--(5,6)--cycle;
\draw[fill] (-1,3) circle(0.1);
\draw[fill] (-1,5) circle(0.1);
\draw[fill] (1,2) circle(0.1);
\draw[fill] (1,3) circle(0.1);
\draw[fill] (1,4) circle(0.1);
\draw[fill] (1,5) circle(0.1);
\draw[fill] (1,6) circle(0.1);
\draw[fill] (2,2) circle(0.1);
\draw[fill] (2,4) circle(0.1);
\draw[fill] (2,6) circle(0.1);
\draw[fill] (3,0) circle(0.1);
\draw[fill] (3,1) circle(0.1);
\draw[fill] (3,2) circle(0.1);
\draw[fill] (3,6) circle(0.1);
\draw[fill] (3,7) circle(0.1);
\draw[fill] (3,8) circle(0.1);
\draw[fill] (4,2) circle(0.1);
\draw[fill] (4,4) circle(0.1);
\draw[fill] (4,6) circle(0.1);
\draw[fill] (5,2) circle(0.1);
\draw[fill] (5,3) circle(0.1);
\draw[fill] (5,4) circle(0.1);
\draw[fill] (5,5) circle(0.1);
\draw[fill] (5,6) circle(0.1);
\draw[fill] (7,3) circle(0.1);
\draw[fill] (7,5) circle(0.1);
\end{tikzpicture}
\end{tabular}
\end{center}

Our concern in this paper is with the restricted cases, and so we do not delve into these graph types.  It is intuitively obvious that as $d_1$ grows, construction of a $(k;f_1^{d_1} f_2^{d_2})$-graph becomes easier.  It might be of interest to graph theorists to make this intuition more rigorous by means of some statistic on the set of planar graphs.

\end{document}